\newtheorem{theorem}{Theorem}
    \newtheorem{corollary}[theorem]{Corollary}
    \newtheorem{lemma}[theorem]{Lemma}
\newcommand{\supp}{\operatorname{Supp}}
\newcommand{\rank}{\operatorname{rank}}
\newcommand{\ind}{\operatorname{ind}}
\newcommand{\li}{\operatorname{li}}
\newcommand{\rad}{\operatorname{rad}}
\newcommand{\Q}{\mathbb{Q}}
\newcommand{\N}{\mathbb{N}}
\newcommand{\R}{\mathbb{R}}
\newcommand{\Z}{\mathbb{Z}}
\title{Average Multiplicative Order of Finitely Generated Subgroup of Rational Numbers Over Primes}
\author{Cihan Pehlivan\footnote{pehlivan@mat.uniroma3.it}\\
\\
Dipartimento di Matematica, Universit\`a Roma Tre,\\
         Largo S. L. Murialdo, 1, I--00146 Roma Italia}
\begin{document}

\maketitle


\begin{abstract}
Given a finitely generated multiplicative subgroup $\Gamma \subseteq \Q^*$,
assuming the Generalized Riemann Hypothesis,
we determine an asymptotic formula for average over prime numbers, powers of the order of the reduction group modulo $p$.
The problem was considered in the case of rank $1$ by Pomerance and Kurlberg.
In the case when $\Gamma$ contains only positive numbers, we give an explicit expression
for the involved density in terms of an Euler product.
We conclude with some numerical computations.
\end{abstract}

\section{Introduction}
 Let  $\Gamma\subseteq\Q^*$ be a finitely generated multiplicative
subgroup. The
\emph{support} of $\Gamma$ is the (finite) set of primes $p$ for which
the $p$--adic valuation $v_p(g)\neq0$ for some $g\in\Gamma$. We
denote this set by $\supp \Gamma$ and define $\sigma_\Gamma=\prod_{p \in
\supp\Gamma}p.$ For each prime $p\nmid\sigma_\Gamma$, it
is well defined the reduction of $\Gamma$ modulo $p$. That is
\begin{equation}\Gamma_p =
\{g \pmod{p}: g \in \Gamma\}.\end{equation}
For simplicity, when $p$
does divide the support of $\Gamma$ , we let $\Gamma_p = \{1\}$.
We also denote by $\Q(\zeta_k, \Gamma^{1/k})$ the extension of the cyclotomic field $\Q(\zeta_k)$ obtained by adding the $k$--th roots of all the
elements in $\Gamma$. We denote Jordan's totient function by
\begin{equation}J_r(m):=m^r\prod_{\ell\mid m}\left(1-\frac1{\ell^r}\right)\end{equation} and sum of $t$--th power of positive divisors of $n$ by
\begin{equation}\sigma_t(n):=\sum_{d|n} d^t.\end{equation}

\begin{theorem}\label{Main Theorem} Let $\Gamma\subseteq\Q^*$ be a finitely generated multiplicative
subgroup with rank $r\geq 2$ and assume that the Generalized Riemann Hypothesis
holds for $\Q(\zeta_k, \Gamma^{1/k})$ ($k\in\N$). Let
\begin{equation}C_{\Gamma,t}:=\sum_{k\geq1}\frac{J_t(k) (\operatorname{rad}(k))^t (-1)^{\omega(k)} }{k^{2t} [\Q(\zeta_k, \Gamma^{1/k}): \Q]}\end{equation} where $\rad(k)$ denotes the product of distinct prime numbers dividing $k$.
Then the series $C_{\Gamma,t}$ converges absolutely and as $x\rightarrow\infty$,
\begin{equation}\sum_{p\leq x}|\Gamma_p|^t= \li(x^{t+1})\left( C_{\Gamma,t} +O_\Gamma\!\left(\frac{\log\log x}{(\log x)^{r}}\right)\right)\end{equation}
 and the constant implied by the $O_\Gamma$--symbol may depend on $\Gamma$.
\end{theorem}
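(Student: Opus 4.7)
The plan is to unfold the $t$-th moment through a divisibility identity for the index $\ind_p(\Gamma):=(p-1)/|\Gamma_p|$, then apply effective Chebotarev under GRH to the Kummer extensions $\Q(\zeta_k,\Gamma^{1/k})$, whose completely split primes characterise the event $k\mid \ind_p(\Gamma)$. The coefficients of $C_{\Gamma,t}$ should arise automatically from Möbius inversion combined with the Jordan totient identity $J_t(k)=k^t\prod_{\ell\mid k}(1-\ell^{-t})$.

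First I would derive the combinatorial identity. Since $|\Gamma_p|^t=(p-1)^t/\ind_p(\Gamma)^t$ and $\mathbf{1}_{\ind_p(\Gamma)=i}=\sum_{m\geq1}\mu(m)\mathbf{1}_{im\mid\ind_p(\Gamma)}$, the substitution $k=im$ yields
\[
\sum_{p\leq x}|\Gamma_p|^t \;=\; \sum_{k\geq1} c_t(k)\,S_k(x),\qquad S_k(x):=\sum_{\substack{p\leq x\\ k\mid\ind_p(\Gamma)}}(p-1)^t,
\]
with $c_t(k)=k^{-t}\sum_{d\mid k}\mu(d)d^t=k^{-t}\prod_{\ell\mid k}(1-\ell^t)$. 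Writing $J_t(k)\rad(k)^t=k^t\prod_{\ell\mid k}(\ell^t-1)$ confirms $c_t(k)=(-1)^{\omega(k)}J_t(k)\rad(k)^t/k^{2t}$, the exact coefficient appearing in $C_{\Gamma,t}$.

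Next, by Kummer theory, for all but finitely many $p$, the condition $k\mid\ind_p(\Gamma)$ is equivalent to $p$ splitting completely in $\Q(\zeta_k,\Gamma^{1/k})$. Under GRH, the Lagarias--Odlyzko effective Chebotarev theorem gives
\[
\pi_k(x):=\#\{p\leq x:k\mid\ind_p(\Gamma)\}=\frac{\li(x)}{n_k}+O\!\bigl(x^{1/2}\log(kx\cdot\disc \Q(\zeta_k,\Gamma^{1/k}))\bigr),
\]
where $n_k:=[\Q(\zeta_k,\Gamma^{1/k}):\Q]$, and partial summation (against the weight $(p-1)^t$) upgrades this to $S_k(x)=\li(x^{t+1})/n_k+O_\Gamma(x^{t+1/2}\log(kx))$. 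Substituting and extending the sum over $k$ to infinity produces the main term $C_{\Gamma,t}\,\li(x^{t+1})$, plus three error contributions to control: the Chebotarev error from $k\leq z$, the tail of the main series from $k>z$, and the tail of $\sum_{k>z}c_t(k)S_k(x)$ handled by an unconditional Brun--Titchmarsh-type upper bound.

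The central obstacle is controlling $n_k$ uniformly in $k$. Although one expects $n_k\asymp\phi(k)k^r$, the exact value suffers a bounded Kummer defect coming from roots of unity in $\Gamma$ and from multiplicative dependencies among the generators; the explicit formulas of Hasse, Schinzel, and their refinements for higher rank supply a lower bound $n_k\gg_\Gamma\phi(k)k^r$ valid for almost all $k$. Combined with $|c_t(k)|\ll k^{-t+\varepsilon}$, the generic term of $C_{\Gamma,t}$ is $\ll k^{-(r+1-\varepsilon)}$, giving absolute convergence precisely when $r\geq2$. Choosing $z$ to be a small power of $\log x$ balances the Chebotarev error $\ll x^{t+1/2}z^{1+\varepsilon}\log^2 x$ against the tail $\ll x^{t+1}z^{-r}$; the claimed savings $\log\log x/(\log x)^r$ then arises from a Mertens-type estimate applied to the tail, where the $\log\log x$ factor accounts for the contribution of squarefree $k$ composed of many small primes. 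Making this optimisation sharp, while carrying the $\Gamma$-dependent implied constants uniformly through, is the main technical burden of the proof.
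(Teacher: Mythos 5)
Your combinatorial unfolding is exactly the paper's: the identity $c_t(k)=k^{-t}\prod_{\ell\mid k}(1-\ell^t)=(-1)^{\omega(k)}J_t(k)\rad(k)^t/k^{2t}$, the effective Chebotarev theorem under GRH for $\Q(\zeta_k,\Gamma^{1/k})$, and the degree lower bound $[\Q(\zeta_k,\Gamma^{1/k}):\Q]\gg_\Gamma\varphi(k)k^r$ all appear there in the same roles. The genuine gap is your treatment of the tail. For squarefree $k$ one has $|c_t(k)|=\prod_{\ell\mid k}(1-\ell^{-t})\asymp1$, not $\ll k^{-t+\varepsilon}$, so $\sum_{k>z}|c_t(k)|S_k(x)$ gets essentially no decay from the coefficients; all the savings must come from the rarity of primes with $k\mid\ind(\Gamma_p)$ for large $k$. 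A Brun--Titchmarsh bound only sees the congruence $k\mid p-1$ and gives $S_k(x)\ll x^{t+1}/(\varphi(k)\log(x/k))$; summed over $z<k\le x$ this is of the same order as the main term (in fact larger by a power of $\log x$), so that step fails outright. What is needed --- and what the paper supplies as its Theorem~6 --- is the GRH-conditional bound $\#\{p\le x:\,|\Gamma_p|\le(p-1)/L\}\ll_\Gamma\pi(x)/L^r$ for $L\le\log x$, proved by combining Matthews' point-counting estimate $\#\{p:|\Gamma_p|\le T\}\ll_\Gamma T^{1+1/r}/\log T$ (this is where the hypothesis $r\ge2$ is genuinely used) with a Chebotarev sieve over prime and small composite divisors of $\ind(\Gamma_p)$. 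The paper first discards the primes with $\ind(\Gamma_p)>z=\log x$ using this bound (contributing $\ll x^t\pi(x)/z^{t+r}$), and only then truncates the divisor sum at $uv\le z$, bounding the resulting cross term by Chebotarev together with a Wirsing mean-value estimate for $\tau(n)\sigma_t(n)/(n^{t-1}\varphi(n))$ --- which is where the $\log\log x$ factor actually comes from. Without an analogue of Theorem~6 your error analysis does not close.

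A smaller point: since $|c_t(k)|\asymp1$ and $[\Q(\zeta_k,\Gamma^{1/k}):\Q]\gg\varphi(k)k^r$, the series $C_{\Gamma,t}$ already converges absolutely for $r\ge1$ (Kurlberg and Pomerance handle $r=1$); the hypothesis $r\ge2$ is not what makes the series converge, but what makes the primes of large index negligible and yields the $(\log x)^{-r}$ saving in the error term.
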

Further on, for the case $t=1$ we use $C_\Gamma$ instead of $C_{\Gamma,1}$. Kurlberg and Pomerance in \cite{arnold} consider the case when $\Gamma=\langle g\rangle$ has rank $1$.
In the special case when $\Gamma\subset\Q^+$, we express
the value of $C_\Gamma$ as an Euler product. To this purpose, we introduce some
notations:

\begin{itemize}
 \item If $\eta\in\Q^*$, by  $\delta(\eta)$ we denote the \emph{field discriminant} of $\Q(\sqrt{\eta})$.
\item For any $k\in\N^+$,
$\Gamma(k)=\Gamma \cdot {\Q^{*}}^k/{\Q^{*}}^k.$
\end{itemize}
For any square-free integer $\eta$,
let
$$\label{teta}
t_\eta=\begin{cases}
\infty &\text{if for all }t\geq0,\ \eta^{2^{t}}{\Q^*}^{2^{t+1}}\not\in\Gamma(2^{t+1})\\
\min\{t\in \N : \eta^{2^t}{\Q^*}^{2^{t+1}}\in\Gamma(2^{t+1})\} &\text{otherwise.}\end{cases}
$$

We will show the following:
\begin{theorem}\label{density}
Assume that $\Gamma $ is a finitely generated subgroup of $\Q^+$. Then
\begin{eqnarray}C_{\Gamma,t}&&=\prod_{p}\left(1-\sum_{\substack{\alpha\geq1}}\frac{p^t-1}{p^{\alpha(t+1)-1} |\Gamma(p^{\alpha})|(p-1)}\right)\nonumber\\
&&\times\left(1+
\sum_{\substack{\eta\mid \sigma_\Gamma\\ \eta\neq1}}
S_\eta
\prod_{p\mid 2\eta}\left(1-\left(\sum_{\substack{\alpha\geq1}}
\frac{p^t-1}{p^{\alpha(t+1)-1}|\Gamma(p^{\alpha})|(p-1)}\right)^{-1}\right)^{-1}
\right)\end{eqnarray}
where
\begin{equation}S_\eta=\frac{\displaystyle\sum_{\alpha\ge\gamma_\eta}
\frac{2^t-1}{2^{\alpha(t+1)-1}|\Gamma(2^\alpha)|}}{
\displaystyle\sum_{\alpha\ge1}
\frac{2^t-1}{2^{\alpha(t+1)-1}|\Gamma(2^\alpha)|}}\end{equation}
 and
$\gamma_\eta=\max\{1+t_\eta,v_2(\delta(\eta))\}.$
\end{theorem}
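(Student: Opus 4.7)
The strategy is to decompose the Kummer degree $d(k):=[\Q(\zeta_k,\Gamma^{1/k}):\Q]$ into a multiplicatively well-behaved part divided by a quadratic entanglement factor, then recognize $C_{\Gamma,t}$ as an Euler product $\prod_p f_p$ modified by correction terms indexed by the square-free divisors $\eta$ of $\sigma_\Gamma$.

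The first step is Kummer-theoretic. For $\Gamma\subset\Q^+$ I would establish
\[d(k)=\frac{\varphi(k)\,|\Gamma(k)|}{\epsilon_\Gamma(k)},\qquad \epsilon_\Gamma(k):=\#\{\eta\text{ square-free}:\sqrt{\eta}\in\Q(\zeta_k)\cap\Q(\Gamma^{1/k})\}.\]
Since all generators are positive, the only failure of linear disjointness between $\Q(\zeta_k)$ and $\Q(\Gamma^{1/k})$ comes from shared quadratic subfields. The inclusion $\sqrt{\eta}\in\Q(\zeta_k)$ is the conductor-discriminant condition $\delta(\eta)\mid k$; the inclusion $\sqrt{\eta}\in\Q(\Gamma^{1/k})$ is equivalent to $\eta^{2^{t_\eta}}\in\Gamma\cdot(\Q^*)^{2^{t_\eta+1}}$ together with $v_2(k)\ge 1+t_\eta$. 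A $\ell$-adic valuation argument at primes $\ell\notin\supp\Gamma$ forces $\eta\mid\sigma_\Gamma$ whenever $t_\eta<\infty$. Combining the two 2-adic requirements yields exactly $v_2(k)\ge\gamma_\eta$, together with $v_p(k)\ge 1$ for every odd $p\mid\eta$; call the resulting conditions on $k$ the set $E(k)\ni\eta$.

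The second step is combinatorial. Expand $\epsilon_\Gamma(k)=1+\sum_{\eta\ne 1}\mathbf{1}[\eta\in E(k)]$ and split $C_{\Gamma,t}=M+\sum_{\eta\ne 1}T_\eta$. Because $J_t$, $\rad^t$, $(-1)^{\omega(\cdot)}$, $\varphi$, and (via CRT applied to $\Q^*/(\Q^*)^{k_1k_2}$) also $|\Gamma(\cdot)|$ are multiplicative on coprime prime powers, the main term factors as $M=\prod_p f_p$. Direct evaluation using $J_t(p^\alpha)=p^{(\alpha-1)t}(p^t-1)$, $\rad(p^\alpha)^t=p^t$ and $\varphi(p^\alpha)=p^{\alpha-1}(p-1)$ collapses the $\alpha\ge 1$ local term to $-\frac{p^t-1}{p^{\alpha(t+1)-1}(p-1)|\Gamma(p^\alpha)|}$, giving exactly the $f_p$ of Theorem~\ref{density}. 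For each $\eta\ne 1$, the defining conditions on $E(k)$ are independent across primes, so $T_\eta$ factors as well: primes $p\nmid 2\eta$ contribute the full $f_p$, odd $p\mid\eta$ contribute the $\alpha\ge 1$ tail (which equals $f_p-1$), and $p=2$ contributes the $\alpha\ge\gamma_\eta$ tail (which by definition of $S_\eta$ equals $S_\eta(f_2-1)$). Assembling gives $T_\eta=M\cdot S_\eta\prod_{p\mid 2\eta}(f_p-1)/f_p$, and the algebraic identity $\bigl(1-(1-f_p)^{-1}\bigr)^{-1}=(f_p-1)/f_p$ converts this into the bracketed expression of the theorem.

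The main obstacle is the first step: justifying the formula for $\epsilon_\Gamma(k)$ and verifying that no entanglement beyond the quadratic one contributes for $\Gamma\subset\Q^+$, together with the correct interplay of the constraints $v_2(k)\ge 1+t_\eta$ and $v_2(k)\ge v_2(\delta(\eta))$ that produces $\gamma_\eta$. Once this Kummer-theoretic input is secure, absolute convergence of $C_{\Gamma,t}$ (from Theorem~\ref{Main Theorem}) legitimizes all rearrangements, and the remainder is routine bookkeeping of multiplicative factors and signs.
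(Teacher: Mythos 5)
Your proposal is correct and follows essentially the same route as the paper: it rests on the degree formula of Lemma~\ref{degree} (which you propose to rederive rather than cite from \cite{pa2}), expands the quadratic entanglement factor over square-free $\eta\mid\sigma_\Gamma$, and factors the resulting sums as Euler products via the multiplicativity of $f_t$, ending with the same algebraic identity for the local factors at $p\mid 2\eta$. The only cosmetic difference is that you split off the $\eta=1$ contribution directly instead of first separating odd and even $k$ as the paper does.
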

A calculation shows that, in the case when $\Gamma=\langle g\rangle$,
the above expression for $C_{\langle g\rangle}$ coincides with that of Kurlberg and Pomerance.
In the special case when $\Gamma$ consists of prime numbers and $t=1$, the above formula can be considerably simplified:

\begin{corollary}\label{densityPrime}
  Let $\Gamma=\langle p_1,\ldots,p_r\rangle$ where all the $p_i$'s are prime numbers and $r\ge1$, with the notation above,
 we have
\begin{eqnarray}C_{\langle p_1,....,p_r\rangle}&&=\prod_{p}\left(1-\frac p{p^{r+2}-1}\right)\nonumber\\
&&\times \left(1+\sum_{\substack{\eta\mid p_1\cdots p_r\\ \eta\neq1}}\frac{1}{2^{\max\{0,v_2(\delta(\eta)/2)\}(r+2)}}\prod_{\ell\mid 2\eta}
\frac{\ell}{\ell+1-\ell^{r+2}}\right).\end{eqnarray}
\end{corollary}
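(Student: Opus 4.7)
The plan is to specialize the formula of Theorem~\ref{density} to $t=1$ and $\Gamma=\langle p_1,\ldots,p_r\rangle$, and collapse every geometric series that arises.

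First I would compute $|\Gamma(p^\alpha)|$ for an arbitrary prime $p$ and any $\alpha\ge 1$. Since the generators are distinct primes, they are multiplicatively independent modulo $p^\alpha$-th powers in $\Q^*$, so $\Gamma\cap{\Q^*}^{p^\alpha}=\langle p_1^{p^\alpha},\ldots,p_r^{p^\alpha}\rangle$ and $|\Gamma(p^\alpha)|=p^{\alpha r}$. Substituting into the inner sum that appears throughout Theorem~\ref{density}, for $t=1$ one obtains
\begin{equation*}
\sum_{\alpha\ge 1}\frac{p-1}{p^{2\alpha-1}|\Gamma(p^\alpha)|(p-1)}
=\sum_{\alpha\ge 1}p^{1-\alpha(r+2)}
=\frac{p}{p^{r+2}-1}.
\end{equation*}
This immediately yields the first factor of the corollary, and a single inversion of $1-\bigl(p/(p^{r+2}-1)\bigr)^{-1}$ produces the inner factor $\ell/(\ell+1-\ell^{r+2})$.

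The only step that is not pure algebra is the sum over squarefree divisors $\eta$ of $\sigma_\Gamma=p_1\cdots p_r$. The key observation is that any such $\eta\neq 1$ is a product of a subset of the generators, so $\eta\in\Gamma$; in particular $\eta\cdot{\Q^*}^2\in\Gamma(2)$ already at $t=0$. Hence $t_\eta=0$, which forces $\gamma_\eta=\max\{1,v_2(\delta(\eta))\}$ and $\gamma_\eta-1=\max\{0,v_2(\delta(\eta)/2)\}$. Evaluating the ratio that defines $S_\eta$ with $|\Gamma(2^\alpha)|=2^{\alpha r}$ is a two-term geometric quotient giving the closed form $S_\eta=2^{-(\gamma_\eta-1)(r+2)}$, which is precisely the leading factor in the $\eta$-sum of the corollary.

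The final step is mere assembly. The main (and quite mild) obstacle is spotting the identity $t_\eta=0$; this is where the hypothesis that the generators are actual prime numbers, rather than arbitrary elements of $\Q^+$, is used in an essential way. Once this is in hand every other manipulation is just the geometric-series identity $\sum_{\alpha\ge\gamma}x^\alpha=x^\gamma/(1-x)$ applied to $x=p^{-(r+2)}$ or $x=2^{-(r+2)}$.
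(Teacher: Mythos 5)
Your proposal is correct and follows essentially the same route as the paper: specialize Theorem~\ref{density} to $t=1$, use $|\Gamma(p^\alpha)|=p^{\alpha r}$ and $t_\eta=0$ (the paper derives the former from the vanishing of the invariants $\Delta_i$ rather than directly from multiplicative independence, but this is the same fact), and collapse the geometric series to get $S_\eta=2^{-(\gamma_\eta-1)(r+2)}$ with $\gamma_\eta-1=\max\{0,v_2(\delta(\eta)/2)\}$. Nothing is missing.
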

The quantity
\begin{equation}C_r= \prod_{p}\left(1-\frac{p}{p^{r+2}-1}\right)\end{equation}
can be computed with arbitrary precision:
\begin{center}
 \begin{tabular}{ll}
 $r$ & $C_r$\\
  \hline
1 & $0.57595996889294543964316337549249669251\cdots$ \\
2 & $0.82357659279814332395380438513901050177\cdots$ \\
3 & $0.92190332088740008067545348360869076931\cdots$ \\
4 & $0.96388805107176946676374437726734997946\cdots$ \\
5 & $0.98282912014687261524345691713313004185\cdots$ \\
6 & $0.99168916383630008819101294319807859837\cdots$ \\
7 & $0.99593155027181927318700546733612700362\cdots$ \\
8 & $0.99799372275691129752727433560285572887\cdots$ \\
9 & $0.99900593591154969071253065973483263501\cdots$ \\
10 & $0.99950593624928276115384423618416539651\cdots$
 \end{tabular}
\end{center}

Furthermore, we have the following corollary.
\begin{corollary} \label{naive_x_rational}
Let $\Gamma $ be a finitely generated subgroup of $\Q^+$  with rank $r$. Then $C_\Gamma$ is a non zero rational multiple of $C_r$.
\end{corollary}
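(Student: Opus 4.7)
My plan is to specialise Theorem~\ref{density} to $t=1$ and identify its generic Euler factor with the factor appearing in~$C_r$. With $t=1$ the local sum becomes $\sum_{\alpha\ge 1}\frac{1}{p^{2\alpha-1}|\Gamma(p^\alpha)|}$, so the decisive computation is that of $|\Gamma(p^\alpha)|$. Since $\Gamma\subset\Q^+$ is torsion-free of rank $r$, I would fix a $\Z$-basis $g_1,\ldots,g_r$ and form the integer matrix $E=(v_q(g_i))$ of rank~$r$. An element $g_1^{a_1}\cdots g_r^{a_r}$ lies in $\Q^{*p^\alpha}$ precisely when $a^{T}E\equiv 0\pmod{p^\alpha}$; passing to the Smith normal form of $E$ with elementary divisors $d_1\mid\cdots\mid d_r$ gives
\[
|\Gamma(p^\alpha)|=\frac{p^{r\alpha}}{\prod_{i=1}^{r}\gcd(d_i,p^\alpha)}.
\]
Hence $|\Gamma(p^\alpha)|=p^{r\alpha}$ whenever $p\nmid d_r$, and a direct geometric-series computation collapses the corresponding local factor to $1-p/(p^{r+2}-1)$, exactly matching the Euler factor of~$C_r$.

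For the finitely many exceptional primes $p\mid d_r$, one has $|\Gamma(p^\alpha)|=p^{r\alpha-c_p}$ for every $\alpha>v_p(d_r)$, with $c_p=\sum_i v_p(d_i)$. The local series thus decomposes as a finite rational initial segment plus a geometric tail of rational ratio $p^{-(r+2)}$, so the exceptional local factor is a positive rational. Its ratio to $1-p/(p^{r+2}-1)$ is a nonzero rational, and the finite product of these ratios yields a nonzero rational $q_1$ with $\text{(first big factor)}=q_1\,C_r$. The second big factor of Theorem~\ref{density} is handled in the same spirit: the outer sum over $\eta\mid\sigma_\Gamma$ is finite, the numerator and denominator of $S_\eta$ are rational sums of the geometric-series type analysed above (at $p=2$), and $\prod_{\ell\mid 2\eta}\bigl(1-(\sum_{\alpha\ge 1}\cdots)^{-1}\bigr)^{-1}$ is a finite product of rationals of the very same form. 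Combining, the second factor equals some rational $q_2$, and we obtain $C_\Gamma=q_1q_2\,C_r$.

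It remains to check $q_1q_2\ne 0$. Each local sum is bounded above by $\sum_{\alpha\ge 1}p^{1-2\alpha}=p/(p^2-1)<1$, so every local factor lies in $(0,1)$ and therefore $q_1>0$. Non-vanishing of $q_2$ reduces to $C_\Gamma\ne 0$, which in the regime $r\ge 2$ follows from Theorem~\ref{Main Theorem} together with the trivial lower bound $\sum_{p\le x}|\Gamma_p|\gg x^2/\log x$ forcing the main coefficient to be strictly positive, and in the rank-one case from the Kurlberg--Pomerance work already cited. The main technical obstacle I anticipate is the careful bookkeeping around the exceptional primes in $\{p:p\mid d_r\}\cup\{2\}\cup\{p:p\mid\sigma_\Gamma\}$: one must track the stabilisation threshold $\alpha_0(p)$ of the sequence $|\Gamma(p^\alpha)|$ and verify that its tail forms a genuine single geometric series, so that every local contribution is demonstrably rational and the correction to $C_r$ is a well-defined nonzero element of~$\Q$.
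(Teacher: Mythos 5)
Your proposal is correct and follows essentially the same route as the paper: the Smith normal form computation of $|\Gamma(p^\alpha)|=p^{r\alpha}/\prod_i\gcd(d_i,p^\alpha)$ is exactly the paper's formula $|\Gamma(p^\alpha)|=p^{\max\{0,\alpha-v_p(\Delta_1),\ldots,r\alpha-v_p(\Delta_r)\}}$ in disguise (with $\Delta_i=d_1\cdots d_i$), and both arguments then split each local series into a finite rational segment plus a geometric tail, matching the factor $1-p/(p^{r+2}-1)$ of $C_r$ at all primes $p\nmid\Delta_r$. The one point where you go beyond the paper is the non-vanishing: the paper only verifies $r_\Gamma\in\Q$ and leaves $r_\Gamma\neq0$ implicit, whereas your positivity bound $\sum_{\alpha\ge1}p^{1-2\alpha}|\Gamma(p^\alpha)|^{-1}\le p/(p^2-1)<1$ for the first factor, combined with $C_\Gamma\neq0$ for the second, is a welcome (if GRH-dependent, via Theorem~\ref{Main Lemma}) addition.
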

We conclude the paper with some numerical evidence. Complete account of the results in this stream we reefer the survey of P. Moree \cite{Msurvey}.
\section{Notational conventions}
Throughout the paper, the letter $p$ always denote \emph{prime numbers}.
As usual, we use $\pi(x)$ to denote
the \emph{number of $p \le x$} and
\begin{equation}\li(x)=\int_2^x \frac{dt}{\log t}\end{equation}
denotes the \emph{logarithmic integral} function. The invariant $\Delta_r(\Gamma)$ of a multiplicative subgroup
$\Gamma\subseteq\Q^*$ with $\rank_\Z(\Gamma)=r$ is defined as the
greatest common divisor of all the minors of size $r$ of the
relation matrix of the group of $\Gamma$ (see
\cite[Section 3.1]{CP} for some details).

$\varphi$ and $\mu$ are respectively the \emph{Euler} and the
\emph{M\"obius} functions. An integer is said \emph{squarefree}
if it is not divisible for the square of any prime number.
If $\eta\in\Q^*$, by  $\delta(\eta)$ we denote the \emph{field discriminant} of $\Q(\sqrt{\eta})$.
 So, if $\eta\in\Z$ is square-free,
 $\delta(\eta)=\eta$ if $\eta\equiv1\ (\bmod\ 4)$, and $\delta(\eta)=4\eta $ otherwise. For $\alpha\in\Q^*$ we denote by $v_\ell(\alpha)$ the \emph{$\ell$--adic valuation} of $\alpha$.

For functions $F$ and $G>0$ the notations $F=O(G)$ and $F\ll G$
are equivalent to the assertion that the inequality $|F|\le c\,G$ holds
with some constant $c>0$. We write $F\sim G$ if $\lim_{x\rightarrow \infty}\frac{F(x)}{G(x)}=1.$  In what follows, all constants implied by the
symbols $O$ and $\ll$ may depend (when obvious) on the small real
parameter $\epsilon$ but are absolute otherwise; we write $O_\lambda$ and $\ll_\lambda$
 to indicate that the implied constant depends on a given
parameter $\lambda$. We also define the index of subgroup $\ind(\Gamma_p)= \frac{p-1}{|\Gamma_p|}.$

\section{Lemmata}

In this section we present some results which we need for proof of the main theorem.
The following Lemma describes explicitly the degree of $[\Q(\zeta_k, \Gamma^{1/k}):\Q]$ (see \cite[Lemma 1 and Corollary 1]{pa2}).
\begin{lemma}\label{degree} Let $k\geq1$ be an integer. With the notation above, we have
$$\left[\Q(\zeta_k, \Gamma^{1/k}):\Q(\zeta_k)\right]=
|\Gamma(k)|/|\widetilde\Gamma(k)|$$
 where
\begin{equation}\widetilde{\Gamma}(k)=(\Gamma\cap\Q(\zeta_k)^{2^{v_{2}(k)}})\cdot{\Q^*}^{2^{v_{2}(k)}}/{\Q^*}^{2^{v_{2}(k)}}.\end{equation}
Furthermore, in the special case when $\Gamma\subset\Q^+$,
\begin{equation}\widetilde{\Gamma}(k)=\{\eta\mid \sigma_\Gamma, \eta^{2^{v_2(k)-1}}{\Q^*}^{2^{v_2(k)}}\in\Gamma(2^{v_2(k)}),\
\delta(\eta)\mid k\}.\end{equation}
\end{lemma}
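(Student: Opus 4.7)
The plan is to tackle the two assertions separately: the first via Kummer theory together with a reduction to the $2$-power part of $k$, and the second via a Capelli-style analysis of which positive rationals become $2^N$-th powers inside a cyclotomic field. I expect the Capelli step in the second assertion to be the main obstacle.

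For the first assertion, Kummer theory over $\Q(\zeta_k)$ yields
\begin{equation*}
[\Q(\zeta_k,\Gamma^{1/k}):\Q(\zeta_k)] = [\Gamma : \Gamma\cap\Q(\zeta_k)^{*k}],
\end{equation*}
and since $|\Gamma(k)| = [\Gamma:\Gamma\cap\Q^{*k}]$ the claim reduces to $|\widetilde{\Gamma}(k)| = [\Gamma\cap\Q(\zeta_k)^{*k}:\Gamma\cap\Q^{*k}]$. Write $k = 2^N m$ with $N = v_2(k)$ and $m$ odd. Because $\gcd(m,2^N)=1$ one has $F^{*k} = F^{*2^N}\cap F^{*m}$ in every field $F$, and the classical fact $\Q^*\cap\Q(\zeta_k)^{*m} = \Q^{*m}$ for odd $m$ (a form of Capelli--Schinzel: for odd $m\ge 3$ and $a\notin\Q^{*m}$, the Galois closure $\Q(\zeta_m,a^{1/m})/\Q$ is non-abelian and so cannot embed into $\Q(\zeta_k)$) gives
\begin{equation*}
\Gamma\cap\Q(\zeta_k)^{*k} = \Gamma\cap\Q(\zeta_k)^{*2^N}\cap\Q^{*m},\qquad \Gamma\cap\Q^{*k} = \Gamma\cap\Q^{*2^N}\cap\Q^{*m}.
\end{equation*}
To strip off the $\Q^{*m}$ factors, pick $u,v\in\Z$ with $um+v2^N=1$ by B\'ezout; the map $g\mapsto g^{um}$ sends each $g\in\Gamma\cap\Q(\zeta_k)^{*2^N}$ into $\Gamma\cap\Q(\zeta_k)^{*2^N}\cap\Q^{*m}$ and differs from the identity by $g^{v2^N}\in\Q^{*2^N}$, so a short check shows that it descends to an isomorphism of quotients giving $[\Gamma\cap\Q(\zeta_k)^{*2^N}:\Gamma\cap\Q^{*2^N}] = |\widetilde{\Gamma}(k)|$, which closes the first assertion.

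For the second assertion, assume $\Gamma\subseteq\Q^+$ and fix $g\in\Gamma\cap\Q(\zeta_k)^{*2^N}$. Since $2^N\mid k$, $\zeta_{2^N}\in\Q(\zeta_k)$, so the positive real root $g^{1/2^N}$ itself lies in $\Q(\zeta_k)$ and $\Q(g^{1/2^N})$ is an abelian real extension of $\Q$. The main obstacle is the Capelli-style observation that for any positive rational $h\notin\Q^{*2}$ the field $\Q(h^{1/4})$ has degree $4$ over $\Q$ and is not Galois (being real, it cannot contain $\zeta_4$), hence not abelian; iterating this up the tower $\Q\subseteq\Q(\sqrt g)\subseteq\Q(g^{1/4})\subseteq\cdots\subseteq\Q(g^{1/2^N})$ forces $g\in\Q^{*2^{N-1}}$. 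Writing $g^{1/2^{N-1}} = \eta h_1^2$ with $\eta$ a positive squarefree integer yields $g\equiv\eta^{2^{N-1}}\pmod{\Q^{*2^N}}$ and $g^{1/2^N} = h_1\sqrt{\eta}$, so membership of this in $\Q(\zeta_k)$ is equivalent to $\Q(\sqrt{\eta})\subseteq\Q(\zeta_k)$, i.e., $\delta(\eta)\mid k$ by the conductor--discriminant formula. The inclusion $\supp g\subseteq\supp\Gamma$ then forces $\eta\mid\sigma_\Gamma$, while distinct positive squarefree $\eta$ give distinct classes in $\Q^*/\Q^{*2^N}$ (since $\sqrt{\eta_1/\eta_2}\in\Q$ forces $\eta_1=\eta_2$), so the map $\eta\mapsto\eta^{2^{N-1}}\Q^{*2^N}$ furnishes the required bijection between $\widetilde{\Gamma}(k)$ and the set on the right-hand side.
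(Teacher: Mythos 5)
Your argument is essentially correct, but note that the paper itself offers no proof of this lemma: it is imported verbatim from Lemma~1 and Corollary~1 of the cited paper of Pappalardi (reference \cite{pa2}), so there is no internal proof to compare against. Taken on its own terms, your Kummer-theoretic route is the natural one and the main steps check out: the reduction $[\Q(\zeta_k,\Gamma^{1/k}):\Q(\zeta_k)]=[\Gamma:\Gamma\cap\Q(\zeta_k)^{*k}]$, the splitting $F^{*k}=F^{*2^N}\cap F^{*m}$, the B\'ezout descent to the $2$-part, and, for $\Gamma\subset\Q^+$, the observation that $\Q(h^{1/4})$ is real of degree $4$ and non-normal (so abelian-ness of $\Q(\zeta_k)$ forces $g\in\Q^{*2^{N-1}}$), followed by the identification of the residual square class via $\delta(\eta)\mid k$. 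The injectivity, surjectivity and well-definedness of $\eta\mapsto\eta^{2^{N-1}}\Q^{*2^N}$ all go through as you indicate.

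Two points deserve more care than you give them. First, your justification of $\Q^*\cap\Q(\zeta_k)^{*m}=\Q^{*m}$ for odd $m$ is stated for ``$a\notin\Q^{*m}$'' as if $x^m-a$ were then irreducible; that is false for composite $m$ (e.g.\ $a=8$, $m=9$). The correct argument treats odd primes $p$ first (where $a\notin\Q^{*p}$ does give irreducibility of $x^p-a$ and a non-normal degree-$p$ field) and then runs an induction on prime powers $p^e\| m$, at each step absorbing a $p$-th root of unity, which is itself a $p^{e-1}$-st power of a root of unity in $\Q(\zeta_k)$ only after a short extra check; alternatively cite Schinzel's theorem on abelian binomials. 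Second, the displayed formula for $\widetilde\Gamma(k)$ in the positive case only makes literal sense for $2\mid k$ (the exponent $2^{v_2(k)-1}$ degenerates when $v_2(k)=0$); the paper handles odd $k$ separately by setting $\widetilde\Gamma(k)=\{1\}$, which is immediate from the first definition, and you should say so. Neither issue is structural, but both should be repaired before the argument is considered complete.
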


The following statement
is obtained using the effective version of the Chebotarev Density Theorem due to Serre (see \cite[Theorem~4]{serre}).

\begin{lemma} [Chebotarev Density Theorem]\label{cdt} Let
$\Gamma \subset \Q^*$ be a finitely generated subgroup of rank $r$ and $k\in\N^+$.
The GRH  for the Dedekind zeta function
of $\Q(\zeta_k, \Gamma^{1/k})$ implies that \begin{equation}\#\left\{p\leq x: p\not\in\supp\Gamma,\
k\mid\ind(\Gamma_p)\right \}=
\frac{\li(x)}{\left[\Q(\zeta_k,
\Gamma^{1/k}):\Q\right]}+O\!\left(\sqrt{x}\log(xk^{r+1}\sigma_\Gamma)\right).
\end{equation}
\end{lemma}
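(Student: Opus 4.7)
The plan is to translate the divisibility condition $k\mid\ind(\Gamma_p)$ into a complete splitting condition in the Galois extension $L_k:=\Q(\zeta_k,\Gamma^{1/k})/\Q$, and then invoke Serre's effective Chebotarev Density Theorem. First I would verify the Kummer-theoretic equivalence: for any prime $p\nmid k\sigma_\Gamma$, the divisibility $k\mid\ind(\Gamma_p)=(p-1)/|\Gamma_p|$ holds if and only if $p$ splits completely in $L_k$. Indeed, $k\mid p-1$ is equivalent to $p$ splitting in $\Q(\zeta_k)$, and, granted this, $\Gamma_p\subseteq(\mathbb{F}_p^*)^k$ is equivalent to every element of $\Gamma$ becoming a $k$-th power modulo $p$, which by Kummer theory is precisely the condition for the primes of $\Q(\zeta_k)$ over $p$ to split completely in $L_k$. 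The finitely many primes in $\supp\Gamma$ or dividing $k$ contribute only $O(\log(k\sigma_\Gamma))$ to either side and are harmlessly absorbed in the error.

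Second, I would apply Serre's effective Chebotarev for the trivial conjugacy class $C=\{1\}\subset\operatorname{Gal}(L_k/\Q)$; under GRH for the Dedekind zeta function of $L_k$ this gives
\[
\#\{p\le x:\ p\text{ splits completely in }L_k\}=\frac{\li(x)}{[L_k:\Q]}+O\!\left(\sqrt{x}\log x+\frac{\sqrt{x}\log|\disc(L_k)|}{[L_k:\Q]}\right).
\]
The first term inside the $O$ already has the desired shape; the work lies in controlling the discriminant contribution so that it fits inside a single logarithm of the shape $\log(xk^{r+1}\sigma_\Gamma)$.

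The main obstacle is therefore the third step, the estimate for $\log|\disc(L_k)|$. The only rational primes ramified in $L_k$ divide $k\sigma_\Gamma$, so from the conductor–discriminant formula together with a standard local bound on the exponent of a ramified prime in a Kummer-type tower one obtains
\[
\log|\disc(L_k)|\ \ll\ [L_k:\Q]\,\log\!\bigl([L_k:\Q]\cdot \rad(k\sigma_\Gamma)\bigr).
\]
Combined with the crude degree bound $[L_k:\Q]\le\varphi(k)\,k^r\le k^{r+1}$, this yields $\log|\disc(L_k)|/[L_k:\Q]\ll\log(k^{r+1}\sigma_\Gamma)$, and the two error terms coalesce into the asserted $O\bigl(\sqrt{x}\log(xk^{r+1}\sigma_\Gamma)\bigr)$. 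I expect this discriminant estimate, and in particular matching the $k^{r+1}$ inside the logarithm rather than a larger power, to be the step where one has to be most careful.
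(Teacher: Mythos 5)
Your proposal is correct and follows exactly the route the paper intends: the paper gives no proof of this lemma beyond citing Serre's effective Chebotarev theorem (Theorem~4 of \cite{serre}), and your argument supplies precisely the standard supporting steps --- the Kummer-theoretic equivalence of $k\mid\ind(\Gamma_p)$ with complete splitting in $\Q(\zeta_k,\Gamma^{1/k})$, the application of Serre's bound to the trivial conjugacy class, and the discriminant estimate via the conductor--discriminant formula together with $[\Q(\zeta_k,\Gamma^{1/k}):\Q]\le\varphi(k)k^r$ and the fact that only primes dividing $k\sigma_\Gamma$ ramify. All three steps check out, so there is nothing to add.
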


The explicit formula for the degree $\left[\Q(\zeta_k,
\Gamma^{1/k}):\Q\right]$ can be found in \cite[Lemma~1]{pa2}.
The next results follows from Lemma~\ref{degree} (see \cite[Equation~7]{pa2}).

\begin{corollary}\label{corr-1}
Let $\Gamma\subset\Q^*$ be a subgroup of $r=\rank_\Z(\Gamma)$ and $k\in\N$. Then
\begin{equation}
2k^r\ge [\Q(\zeta_k, \Gamma^{1/k}):\Q(\zeta_k)] \ge
\frac{(k/2)^r}{\Delta_r(\Gamma)}.
\end{equation}
\end{corollary}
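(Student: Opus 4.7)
The plan is to invoke Lemma~\ref{degree}, which reduces the corollary to estimating the two factors in
$$[\Q(\zeta_k,\Gamma^{1/k}):\Q(\zeta_k)]=\frac{|\Gamma(k)|}{|\widetilde\Gamma(k)|},$$
exploiting that $\Gamma\cong\Z^r\times T$ with the torsion subgroup $T\subseteq\{\pm1\}$.

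For the upper inequality, since $\Gamma(k)$ is a quotient of $\Gamma/\Gamma^k\cong(\Z/k\Z)^r\times T/T^k$, one immediately gets $|\Gamma(k)|\le k^r\cdot|T/T^k|\le 2k^r$, and the trivial bound $|\widetilde\Gamma(k)|\ge 1$ delivers the claim.

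The lower inequality needs two complementary estimates. First I would bound $|\Gamma(k)|$ from below by writing
$$|\Gamma(k)|=\frac{|\Gamma/\Gamma^k|}{[\Gamma\cap{\Q^*}^k:\Gamma^k]}$$
and showing, via a Smith normal form analysis on the valuation matrix $(v_p(g_i))_{p,i}$---precisely the setting in which $\Delta_r(\Gamma)$ is defined in \cite[Section~3.1]{CP}---that the index $[\Gamma\cap{\Q^*}^k:\Gamma^k]$ divides $\Delta_r(\Gamma)$; since $|\Gamma/\Gamma^k|\ge k^r$, this yields $|\Gamma(k)|\ge k^r/\Delta_r(\Gamma)$. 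Second, I would bound $|\widetilde\Gamma(k)|$ from above by $2^r$. This is the step I expect to be the main obstacle: any representative $g\in\Gamma\cap\Q(\zeta_k)^{2^{v_2(k)}}$ of an element of $\widetilde\Gamma(k)$ is in particular a square in $\Q(\zeta_k)$, so $\Q(\sqrt g)\subseteq\Q(\zeta_k)$; by Kronecker--Weber the squarefree part of $g$ is forced into the finite set of $\eta$ with $\delta(\eta)\mid k$. Combining this with the explicit description of $\widetilde\Gamma(k)$ furnished by Lemma~\ref{degree}---the $\eta\mid\sigma_\Gamma$ parametrization in the $\Gamma\subset\Q^+$ case, extended to arbitrary $\Gamma$ by treating the sign factor separately---shows that $\widetilde\Gamma(k)$ embeds into an elementary abelian $2$-group of rank at most $r$, yielding the desired bound.

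Putting the pieces together gives
$$\frac{|\Gamma(k)|}{|\widetilde\Gamma(k)|}\ge\frac{k^r/\Delta_r(\Gamma)}{2^r}=\frac{(k/2)^r}{\Delta_r(\Gamma)},$$
as required.
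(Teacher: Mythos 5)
The paper gives no proof of this corollary at all: it simply asserts that the statement ``follows from Lemma~\ref{degree}'' and points to \cite[Equation~7]{pa2}, so there is nothing internal to compare against beyond that citation. Your reconstruction follows exactly the route the paper indicates, and the two main ingredients you use --- the identity $|\Gamma(m)|=m^r/\gcd(m^r,m^{r-1}\Delta_1,\dots,\Delta_r)$ from \cite[Proposition~2]{CP}, which gives $|\Gamma(k)|\ge k^r/\Delta_r(\Gamma)$, and an upper bound $|\widetilde\Gamma(k)|\le 2^r$ --- are the same ones the paper itself quotes later in the proofs of Corollaries~\ref{densityPrime} and~\ref{naive_x_rational}. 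The upper inequality and the lower bound on $|\Gamma(k)|$ are fine as you state them.

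The one step that is under-justified is precisely the one you flag as the main obstacle. The Kronecker--Weber argument only shows that $\widetilde\Gamma(k)$ is contained in the set of squarefree $\eta\mid\sigma_\Gamma$ with $\delta(\eta)\mid k$; that set can have as many as $2^{s}$ elements with $s=\#\supp\Gamma$, and $s$ may be strictly larger than $r$, so this alone does not give the bound $2^r$. What forces the rank down to $r$ is the \emph{other} condition in the parametrization of Lemma~\ref{degree}: writing $v=v_2(k)$, the map $\eta\mapsto\eta^{2^{v-1}}{\Q^*}^{2^{v}}$ is injective on positive squarefree integers and sends the admissible $\eta$ into the $2$-torsion subgroup of $\Gamma(2^{v})$, which is a quotient of the $r$-generated group $\Gamma/\Gamma^{2^{v}}$ and hence has $2$-torsion of order at most $2^r$ (at most $2^{r+1}$ when $-1\in\Gamma$, which is compensated by the corresponding extra factor $2$ in $|\Gamma/\Gamma^k|$). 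With that observation made explicit, your argument closes and yields $[\Q(\zeta_k,\Gamma^{1/k}):\Q(\zeta_k)]\ge (k^r/\Delta_r(\Gamma))/2^r=(k/2)^r/\Delta_r(\Gamma)$ as required.
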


\noindent Next Lemma is implicit in the work of C.~R.~Matthews (see \cite{MC}).
\begin{lemma}\label{Matthews Lemma} Assume that $\Gamma\subseteq\Q^*$ is a multiplicative subgroup of rank $r\geq 2$
and assume that $(a_1,\ldots, a_r)$ is a $\Z$--basis of $\Gamma$.
Let $t\in\R$, $t>1$. We have the following estimate
\begin{equation}
\#\left\{p\not\in\supp\Gamma: |\Gamma_p|\le t\right\}\ll_\Gamma\frac{t^{1+
1/r}}{\log t}.
\end{equation}
\end{lemma}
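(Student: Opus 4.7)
The plan is a pigeonhole-plus-sieve argument in the spirit of Matthews' original work. The first step extracts an algebraic relation: for each prime $p\not\in\supp\Gamma$ with $|\Gamma_p|\le t$, set $N=\lceil t^{1/r}\rceil$ and consider the map $\{0,1,\dots,N\}^r\to\Gamma_p$ sending $(n_1,\dots,n_r)\mapsto\prod_i a_i^{n_i}\pmod p$. Since $(N+1)^r>t\ge|\Gamma_p|$, pigeonhole forces a collision, producing a nonzero vector $(m_1,\dots,m_r)\in\Z^r$ with $|m_i|\le N$ and $\prod_i a_i^{m_i}\equiv 1\pmod{p}$. Because the $a_i$ form a $\Z$-basis, the rational $\alpha_m:=\prod_i a_i^{m_i}-1$ is nonzero; after clearing denominators, its numerator is a nonzero integer $A_m$ with $|A_m|\le 2H^{rN}$, where $H$ bounds the absolute values of both the numerators and the denominators of the $a_i$. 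In particular $\log|A_m|\ll_\Gamma N$.

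The second step counts primes using these relations. Introduce a threshold $y>1$ to be optimized. Primes $p\le y$ contribute $\pi(y)\ll y/\log y$. Any prime $p>y$ with $|\Gamma_p|\le t$ must divide some $A_m$ for $m$ in the punctured cube $[-N,N]^r\setminus\{0\}$, and each fixed $A_m$ has at most $\log|A_m|/\log y\ll_\Gamma N/\log y$ prime divisors exceeding $y$. Summing over the $O(N^r)$ admissible $m$ yields $O_\Gamma(N^{r+1}/\log y)=O_\Gamma(t^{1+1/r}/\log y)$ primes from this regime.

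Choosing $y=t^{1+1/r}$ balances the two estimates and produces the claimed bound $O_\Gamma(t^{1+1/r}/\log t)$. The one delicate point is the non-vanishing of $\alpha_m$, which rests on the multiplicative independence of the $\Z$-basis $(a_1,\dots,a_r)$; everything else is bookkeeping with heights together with the elementary inequality bounding the number of large prime divisors of an integer by $\log$ of its magnitude divided by $\log y$. The hypothesis $r\ge 2$ is not needed for the inequality itself, but is precisely what makes the exponent $1+1/r$ strictly less than $2$, which is presumably the feature exploited when invoking this lemma in the proof of Theorem~\ref{Main Theorem}.
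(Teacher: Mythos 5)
Your proof is correct. The paper itself gives no argument for this lemma --- it simply remarks that the statement is implicit in Matthews \cite{MC} --- and what you have written is exactly the standard argument behind that citation: pigeonhole on the cube $\{0,\dots,N\}^r$ with $N=\lceil t^{1/r}\rceil$ to produce a nonzero exponent vector $m$ with $\prod_i a_i^{m_i}\equiv 1 \pmod p$, nonvanishing of the cleared numerator $A_m$ via the multiplicative independence of the $\Z$--basis, the bound $\log|A_m|\ll_\Gamma N$ from height considerations, and the count of at most $\log|A_m|/\log y$ prime divisors exceeding $y$ of each $A_m$, summed over $O(N^r)$ vectors and balanced against $\pi(y)$ by taking $y=t^{1+1/r}$. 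Your closing remarks are also accurate: the estimate holds for $r=1$ as well, and the hypothesis $r\ge 2$ only matters downstream, where the exponent $1+1/r<2$ is needed in the first step of the proof of Theorem~\ref{Main Lemma}.
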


\begin{theorem}\label{Main Lemma} Assume the GRH. Let $\Gamma$ be a multiplicative
subgroup of $\Q^*$ of rank $r \geq 2$.
Then, for  $1\le L\le\log x$,
 we have
\begin{equation}\#\left\{ p\leq x: p\not\in\supp\Gamma, |\Gamma_p|\leq \frac{p-1}{L}\right\}\ll_\Gamma \frac{\pi(x)}{L^r}.\end{equation}
\end{theorem}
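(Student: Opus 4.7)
The plan is to count the primes $p\le x$ with $\ind(\Gamma_p)\ge L$, which is exactly the condition $|\Gamma_p|\le(p-1)/L$. I will introduce a parameter $z\ge L$ (to be optimized at the end) and split these primes into two classes according to whether $\ind(\Gamma_p)$ admits a prime divisor larger than $z$: Case~(A), $\ind(\Gamma_p)$ is $z$-smooth; Case~(B), some prime $\ell\mid\ind(\Gamma_p)$ satisfies $\ell>z$.

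In Case~(A), smoothness together with $\ind(\Gamma_p)\ge L$ produces a divisor $d\mid\ind(\Gamma_p)$ with $L\le d\le Lz$: starting from $\ind(\Gamma_p)$ and repeatedly dividing by the smallest prime factor (which is $\le z$), the first quotient to fall into $[1,Lz]$ still satisfies $d>L$, because the previous value exceeded $Lz$ and we divided by a prime $\le z$. A union bound then bounds the Case~(A) count by
$$\sum_{L\le d\le Lz}\#\{p\le x:d\mid\ind(\Gamma_p)\}.$$
I will estimate each summand by Lemma~\ref{cdt}; Corollary~\ref{corr-1} supplies the key degree lower bound $[\Q(\zeta_d,\Gamma^{1/d}):\Q]\gg_\Gamma\varphi(d)d^{r}$, so the accumulated main term is at most $\li(x)\sum_{d\ge L}1/(\varphi(d)d^{r})\ll\pi(x)/L^{r}$ via the standard estimate $\sum_{d\le N}d/\varphi(d)\ll N$ and partial summation, while the total Chebotarev error adds up to $\ll Lz\sqrt{x}\log x$.

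In Case~(B), any prime $\ell>z$ dividing $\ind(\Gamma_p)$ forces $|\Gamma_p|\le(p-1)/\ell\le x/z$, so Lemma~\ref{Matthews Lemma} applied with $t=x/z$ bounds the number of such primes by $\ll_{\Gamma}(x/z)^{1+1/r}/\log(x/z)$.

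The remaining step is to balance the two estimates in the free parameter $z$. Choosing $z$ of order $x^{1/(r+1)}L^{r^{2}/(r+1)}$ makes the Case~(B) contribution exactly of size $\pi(x)/L^{r}$, and then the Case~(A) Chebotarev error is $Lz\sqrt{x}\log x\ll x^{(r+3)/(2(r+1))}(\log x)^{O(1)}$, which is comfortably dominated by $\pi(x)/L^{r}\gg x/(\log x)^{r+1}$ for every $r\ge2$ and $L\le\log x$. I expect this optimization to be the main technical step: Case~(A) error grows with $z$ while Case~(B) decreases with it, and one needs the saving of order $x^{(r-1)/(2(r+1))}$ to close the gap; this exponent is positive only for $r\ge 2$, which also matches the hypothesis of Lemma~\ref{Matthews Lemma} and explains why the rank-one case has to be excluded.
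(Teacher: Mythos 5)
Your argument is correct, and it uses the same two key inputs as the paper (Lemma~\ref{Matthews Lemma} for primes with very small $|\Gamma_p|$, Lemma~\ref{cdt} with Corollary~\ref{corr-1} for a union bound over divisors of $\ind(\Gamma_p)$), but the decomposition is genuinely different. The paper uses \emph{three} steps: it first removes primes with $|\Gamma_p|\le (p-1)/t$ via Matthews, then removes primes for which $\ind(\Gamma_p)$ has a \emph{prime} divisor $q\in[L,t]$ by summing Chebotarev over primes $q$ (main term $\sum_{q\ge L}1/(q^r\varphi(q))\ll L^{-r}$, error $\ll\sqrt{x}\,t\log x$), and only then observes that the surviving primes have $L$-smooth index $\ge L$, hence a divisor in the short range $[L,L^2]$; this keeps the final union bound over only $O(L^2)$ integers and permits the large choice $t=x^{1/2}/(L^r\log^2x)$. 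You skip the intermediate prime-divisor step entirely: your smooth/rough dichotomy at level $z$ sends all $z$-smooth indices directly to a divisor in $[L,Lz]$, so your Chebotarev error accumulates to $Lz\sqrt{x}\log x$ over the much longer range, which forces the smaller threshold $z\asymp x^{1/(r+1)}L^{r^2/(r+1)}$ and makes Matthews work harder (it must absorb everything with a prime index-divisor exceeding $z$ rather than exceeding $x^{1/2-o(1)}$). The arithmetic closes in both versions precisely when $r\ge2$ — your exponent $(r+3)/(2(r+1))<1$ versus the paper's $(r+1)/(2r)<1$ — so nothing is lost; your version is shorter (two cases instead of three) at the cost of a tighter, more delicate optimization, while the paper's extra step buys a lot of slack in the error terms. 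All the supporting estimates you invoke (the greedy divisor-in-$[L,Lz]$ argument for smooth indices, $\sum_{d>L}1/(\varphi(d)d^r)\ll L^{-r}$ by partial summation on $\sum_{d\le N}d/\varphi(d)\ll N$, and the degree lower bound $[\Q(\zeta_d,\Gamma^{1/d}):\Q]\gg_\Gamma \varphi(d)d^r$) are sound.
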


The proof of the above is routine and easier than the main theorem in \cite{H} and to \cite[Theorem 6]{arnold}. Hence we will skip
some of the details.
\begin{proof}

Let $t$, $L\leq t\leq x$ be a parameter that will be chosen later.
\begin{itemize}
\item \emph{first step}:
First consider primes $p\not\in\supp\Gamma$ such that $|\Gamma_p|\leq \frac{p-1}{t}$. By Lemma \ref{Matthews Lemma}, we have
\begin{equation}\#\left\{p\not\in\supp\Gamma: |\Gamma_p|\le \frac{x}{t}\right\}\ll_\Gamma\frac{(x/t)^{1+
1/r}}{\log (x/t)}.\end{equation}

\item \emph{second step}:
Next consider the primes $p\not\in\supp\Gamma$ such that there exists a prime $q$, $L\le q\le t$ such that
$q\mid\ind(\Gamma_p)=\frac{p-1}{|\Gamma_p|}$.
If we apply Lemma~\ref{cdt}, we obtain
\begin{eqnarray}
\#\{p\le x: p\not\in\supp\Gamma, q \mid \ind(\Gamma_p)\}&=&
\frac{\li(x)}{\left[\Q(\zeta_q,
\Gamma^{1/q}):\Q\right]}+O_\Gamma\left(\sqrt{x}\log(xq)\right)\nonumber\\
&\ll_\Gamma& \frac{\pi(x)}{q^r \varphi(q)}+ \sqrt{x}\log(x q)
\end{eqnarray}
where in the latter estimate we have applied Corollary~\ref{corr-1}.
If we sum the above over primes
 $q$: $L\le q\le t$, we obtain
 \begin{eqnarray}
 &&\#\{p\le x: p\not\in\supp\Gamma, \exists q \mid \ind(\Gamma_p), L\le q\le t\}\nonumber\\
 &&\ll_\Gamma\sum_{\substack{q\text{ prime}\nonumber\\  L\le q\le t}}
\left(\frac{\pi(x)}{q^r \varphi(q)}+ \sqrt{x}\log(x q)\right)\ll_\Gamma\frac{\pi(x)}{L^r}+ x^{1/2}t\log x.
 \end{eqnarray}

\item \emph{third step}:
The primes $p$ that were not counted in previous steps, have the property
that all the prime divisors of $\ind(\Gamma_p)$ belong to the interval $[ 1, L]$. Hence, for such primes $p$,
$\ind(\Gamma_p)$ is divisible for some integer $d$ in $[ L, L^2]$.

Applying again Lemma~\ref{cdt} and Corollary~\ref{corr-1}, and taking the  sum over $d$  we deduce that the total
number of such primes is
\begin{equation}\ll_\Gamma \sum_{\substack{ d\in\N\\ L<d\le L^2}}\left(\frac{\pi(x)}{d^r \varphi(d)}+ x^\frac1{2}\log(xd)\right)\ll_\Gamma
\frac{\pi(x)}{L^r}+ x^{1/2} L^2\log x.\end{equation}
\end{itemize}
A choice of $t=\frac {x^{1/2}}{L^r \log^2 x}$ allows us to conclude the proof.
\end{proof}

The Theorem of Wirsing \cite{Wirsing} is formulated as follows.
\begin{lemma}\label{multiplicative}
Assume that a real valued multiplicative function $h(n)$ satisfies the following conditions.
\begin{itemize}
\item $h(n)\geq0, n=1,2,..;$
\item $h(p^n)\leq c_1 {c_2}^v, v=2,3...$, for some constants $c_1, c_2$ with $c_2<2;$
\item there exists a constant $\tau>0$ such that
\begin{equation} \sum_{p\leq x}h(p)= (\tau + o(1)) \frac{x}{\log x}.\end{equation}
\end{itemize}
Then for any $x\geq0$,
\begin{equation}\sum_{n\leq x}h(n)= \left( \frac{1}{e^{\gamma\tau}\Gamma(\tau)}+o(1)\right) \frac{x}{\log x} \prod_{p\leq x}\sum_{\nu\geq0}\frac{h(p^\nu)}{p^\nu}\end{equation}
where $\gamma$ is the Euler constant, and
\begin{equation} \Gamma(s)= \int_0^\infty e^{-t}t^{s-1} dt\end{equation} is the gamma function.
\end{lemma}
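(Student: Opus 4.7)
The plan is to follow Wirsing's original elementary argument. The starting point is the identity
\[
\sum_{n\le x} h(n)\log n = \sum_{p^k\le x} h(p^k)\, k\log p \sum_{\substack{m\le x/p^k \\ p\nmid m}} h(m),
\]
obtained by expanding $\log n = \sum_{d\mid n}\Lambda(d)$ and using the multiplicativity of $h$ under the factorization $n = p^k m$ with $\gcd(p,m)=1$.

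Setting $S(x) := \sum_{n\le x} h(n)$ and $T(x) := \sum_{n\le x} h(n)\log n$, I would first link the two via partial summation, obtaining $T(x) = S(x)\log x - \int_1^x S(u)\,du/u$. The second hypothesis $h(p^\nu)\le c_1 c_2^\nu$ with $c_2<2$ guarantees that the contribution of prime powers $p^k$ with $k\ge2$ is bounded by a convergent series in $p$ and hence contributes only a lower-order term; likewise the coprimality restriction $p\nmid m$ in the inner sum can be dropped up to a negligible error. What remains is the approximate recursion
\[
S(x)\log x \sim \sum_{p\le x} h(p)\log p \cdot S(x/p),
\]
while partial summation from the third hypothesis $\sum_{p\le x} h(p)\sim\tau x/\log x$ yields $\sum_{p\le x} h(p)\log p\sim\tau x$.

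Next I would write $S(x) = U(x)\cdot x/\log x$, so that the recursion becomes a Volterra-type integral equation whose bounded solutions converge to a constant. To pin down the value of this constant I would match against the truncated Euler product: Mertens' theorem gives $\prod_{p\le x}(1-1/p)^{-\tau}\sim e^{\gamma\tau}(\log x)^\tau$, and setting $G(s):=\prod_p(1-p^{-s})^\tau\sum_{\nu\ge0} h(p^\nu)/p^{\nu s}$, the hypotheses on $h$ ensure that $G(1)$ converges to a finite positive value. One then arrives at $S(x)\sim G(1)\cdot x(\log x)^{\tau-1}/\Gamma(\tau)$; rewriting $G(1)$ as the asymptotic value of $e^{-\gamma\tau}(\log x)^{-\tau}\prod_{p\le x}\sum_\nu h(p^\nu)/p^\nu$ then recovers the form in the statement.

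The main obstacle will be justifying that $U(x)$ not only stays bounded but actually converges to the specific constant $G(1)/\Gamma(\tau)$. Wirsing handles this in his original paper via an intricate bootstrap combined with explicit error estimates depending on $c_1$, $c_2$ and the rate of convergence in the prime-sum hypothesis. A cleaner modern alternative invokes the Selberg--Delange theorem for $F(s)=\sum h(n)/n^s=\zeta(s)^\tau G(s)$, at the cost of needing growth bounds on $G$ in a vertical strip that must be extracted from the elementary hypotheses provided.
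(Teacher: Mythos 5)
The paper offers no proof of this lemma: it is Wirsing's theorem (Satz~1 of \cite{Wirsing}), stated and used as a black box, so there is no in-paper argument to compare against. Judged on its own, your outline correctly reproduces the skeleton of Wirsing's elementary proof --- the identity $\sum_{n\le x}h(n)\log n=\sum_{p^k\le x}h(p^k)\,k\log p\sum_{m\le x/p^k,\ p\nmid m}h(m)$, the reduction (using $c_2<2$ to discard higher prime powers) to the approximate recursion $S(x)\log x\approx\sum_{p\le x}h(p)\log p\;S(x/p)$, and Mertens' theorem to reconcile the factor $(\log x)^{\tau}$ with the truncated Euler product. But the decisive step is missing, not merely deferred: showing that the solution of this recursion is asymptotic to the \emph{specific} constant $e^{-\gamma\tau}/\Gamma(\tau)$ times the product, rather than merely bounded above and below by multiples of it, is where essentially all of Wirsing's work lies (an iteration converging to the profile $u\mapsto u^{\tau-1}/\Gamma(\tau)$, the normalized solution of $uf(u)=\tau\int_0^u f$, which is exactly where the gamma function enters). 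Naming this ``an intricate bootstrap'' acknowledges the gap; it does not close it.

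Two further points. First, the normalization $S(x)=U(x)\,x/\log x$ makes $U(x)=(\log x)^{\tau+o(1)}$, so $U$ is unbounded for $\tau>0$ (here $\tau=2$ in the application) and the phrase ``bounded solutions converge to a constant'' does not apply as written; you must divide by the full expected main term $x(\log x)^{\tau-1}$, or equivalently by $\tfrac{x}{\log x}\prod_{p\le x}\sum_{\nu}h(p^\nu)p^{-\nu}$, before any such statement makes sense. Second, the proposed Selberg--Delange shortcut is not actually available under the stated hypotheses: that method requires $G(s)=F(s)\zeta(s)^{-\tau}$ to continue holomorphically with controlled growth into a region to the left of $\Re(s)=1$, and the bare assumption $\sum_{p\le x}h(p)=(\tau+o(1))\,x/\log x$, with no error term, yields no such continuation. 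The elementary Wirsing route is the one that works at this level of generality, which is precisely why the paper cites \cite{Wirsing} rather than proving the lemma.
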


\section{Proof of the Theorem $2$}
\begin{proof}[Proof of Theorem ~\ref{density}]We start by splitting the sum $C_{\Gamma,t}$ as
\begin{equation}C_{\Gamma,t}:=\sum_{k\geq1}\frac{J_t(k) (\operatorname{rad}(k))^t (-1)^{\omega(k)} }{k^{2t} [\Q(\zeta_k, \Gamma^{1/k}): \Q]}=A_1+A_2\end{equation}
where $A_1$ is the sum of the terms corresponding to odd values of $k$ and $A_2$ is the sum of the terms corresponding to even values of $k$.
Note that if $\Gamma \subseteq \Q^+$ by Lemma \ref{degree} we have
\begin{equation}[\Q(\zeta_k, \Gamma^{1/k}): \Q]=\frac{\varphi(k)|\Gamma(k)|}{|\widetilde{\Gamma}(k)|}\end{equation}
where, if $k$ is even,
\begin{equation}\widetilde{\Gamma}(k)=\{\eta\mid \sigma_\Gamma, \eta^{2^{v_2(k)-1}}{\Q^*}^{2^{v_2(k)}}\in\Gamma(2^{v_2(k)}),\
\delta(\eta)\mid k\} \end{equation}
while if $k$ is odd $\widetilde{\Gamma}(k)=\{1\}$. We define $$f_t(k)=\frac{J_t(k) (\operatorname{rad}(k))^t (-1)^{\omega(k)} }{k^{2t} \varphi(k)|\Gamma(k)| }.$$
Note that if $D\in\N^+$ is even, since $f_t(k)$ is multiplicative in $k$, then
\begin{equation}\sum_{\substack{k\geq1\\ \gcd(k,D)=1}}f_t(k)=
\prod_{p\nmid D}\left(1+\sum_{\alpha\ge1}f_t(p^\alpha)\right)=\prod_{p\nmid D}\left(1-\sum_{\alpha\ge1}\frac{p^t-1}{p^{\alpha(t+1)-1}|\Gamma(p^\alpha)|(p-1)}\right).\end{equation}
Therefore, we have the identity
\begin{equation}A_1= \prod_{\substack{p>2}}\left(1+\sum_{\alpha\ge1}f_t(p^\alpha)\right)= \prod_{\substack{p>2}}\left(1-\sum_{\alpha\ge1}\frac{p^t-1}{p^{\alpha(t+1)-1}|\Gamma(p^\alpha)|(p-1)}\right).\end{equation}
We can write $A_2$ as,
\begin{eqnarray}
A_2&=&\sum_{\eta\mid \sigma_\Gamma}\sum_{\substack{k\geq1,  2\mid k\\ \widetilde{\Gamma}(k)\ni\eta}}\frac{J_t(k) (\operatorname{rad}(k))^t (-1)^{\omega(k)} }{k^{2t} \varphi(k)|\Gamma(k)|}\nonumber\\
&=&\sum_{\eta\mid \sigma_\Gamma}
\sum_{\substack{\alpha\ge1\\ \eta^{2^{\alpha-1}}{\Q^*}^{2^{\alpha}}\in\Gamma(2^{\alpha})}}\sum_{\substack{k\geq1\\  v_2(k)=\alpha\\ \delta(\eta)\mid k}} f_t(k)\nonumber\\
&=&\sum_{\eta\mid \sigma_\Gamma}
\sum_{\substack{\alpha\ge1\\ \eta^{2^{\alpha-1}}{\Q^*}^{2^{\alpha}}\in\Gamma(2^{\alpha})
\\ \alpha\ge v_2(\delta(\eta))}}
\frac{-(2^t-1))}{2^{\alpha(t+1)-1}|\Gamma(2^\alpha)|}
\sum_{\substack{k\geq1\\  2\nmid k\\ \delta(\eta)\mid 8k}}
f_t(k).
\end{eqnarray}
Now write $\delta(\eta)=2^{v_2(\delta(\eta))}M$. Then
\begin{align}
&\sum_{\substack{k\geq1\\  2\nmid k\\ \delta(\eta)\mid 8k}}
f_t(k)=
\prod_{\substack{p>2\\ p\nmid M}}\left(1+\sum_{\substack{\alpha\geq1}}f_t(p^\alpha)\right)\prod_{\substack{p>2\\ p\mid M}}\left(\sum_{\substack{\alpha\geq1}}f_t(p^\alpha)\right)\nonumber\\
&=A_1\prod_{\substack{p>2\\ p\mid M}}\left(1+\sum_{\substack{\alpha\geq1}}
f_t(p^\alpha)\right)^{-1}\left(\sum_{\substack{\alpha\geq1}}f_t(p^\alpha)\right)
\end{align}
Hence, if $t_\eta$ is the quantity defined in (\ref{teta}), then
$$C_{\Gamma,t}:=A_1\times\left(1+
\sum_{\eta\mid \sigma_\Gamma}
\sum_{\substack{\alpha\ge1\\ \alpha\ge t_\eta+1
\\ \alpha\ge v_2(\delta(\eta))}}
\frac{-(2^t-1)}{2^{\alpha(t+1)-1}|\Gamma(2^\alpha)|}
\prod_{\substack{p>2\\ p\mid M}}\left(1+\left(\sum_{\substack{\alpha\geq1}}
f_t(p^\alpha)\right)^{-1}\right)^{-1}
\right).$$
Now let
$$\delta_\Gamma:=\prod_{p\text{ prime}}\left(1+\sum_{\substack{\alpha\geq1}}f_t(p^\alpha)\right)=\prod_{p\text{ prime}}\left(1-\sum_{\substack{\alpha\geq1}}\frac{p^t-1}{p^{\alpha(t+1)-1}|\Gamma(p^\alpha)|(p-1)}\right)$$
and deduce that
$$C_{\Gamma,t}=\delta_\Gamma\left(1+
\sum_{\substack{\eta\mid \sigma_\Gamma\\ \eta\neq1}}
\frac{\displaystyle\sum_{\alpha\ge\gamma_\eta}
\frac{2^t-1}{2^{\alpha(t+1)-1}|\Gamma(2^\alpha)|}}{
\displaystyle\sum_{\alpha\ge1}
\frac{2^t-1}{2^{\alpha(t+1)-1}|\Gamma(2^\alpha)|}}
\prod_{p\mid 2\eta}\left(1+\left(\sum_{\substack{\alpha\geq1}}
f_t(p^\alpha)\right)^{-1}\right)^{-1}
\right)$$
where $\gamma_\eta=\max\{1+t_\eta,v_2(\delta(\eta))\}$ and this completes the proof.
\end{proof}

\section{Proof of Corollary $3$}
Let $\Gamma$ be a finitely generated subgroup of $\Q^+$ of rank $r$ and let $(a_1,...,a_r)$ be a $\Z-$basis of $\Gamma$.
We write $\supp(\Gamma)=\{p_1,...,p_s\}$.
Then we can construct the $s\times r-$matrix with coefficients in $\Z:$

\begin{equation}M(a_1,...,a_r)=A= \begin{pmatrix}
\alpha_{1,1} & ... & \alpha_{1,r} \\
\vdots & & \vdots & \\
\alpha_{s,1} & ... & \alpha_{s,r}
\end{pmatrix}\end{equation}
defined by the property that $|a_i|= (p_1)^{\alpha_{1,i}}...(p_s)^{\alpha_{s,i}}$. It is clear that $s\ge r$ and that
the rank of the matrix $M(a_1,...,a_r)$ equals $r$. For all $i=1,...,r$ we define the $i-$th  exponent of $\Gamma$ by
$$\Delta_i=\Delta_i(\Gamma)= \gcd( \operatorname{det}A: A\ \text{is a}\ i\times i\ \text{minor of}\ M(a_1,...,a_r))$$
and we also set $\Delta_0=1$.
 For $m\in\N$, we have (see \cite[Proposition 2]{CP} )
$$ | \Gamma(m)|= \frac{
m^r}{\gcd(m^r,m^{r-1}\Delta_1,...,m \Delta_{r-1}, \Delta_r)}$$
and in particular, for every prime power $p^\alpha$, we have
$$|\Gamma(p^\alpha)|= p^{\max\{0,\alpha-v_p(\Delta_1),...,(r-1)\alpha-v_p(\Delta_{r-1}),r\alpha-v_p(\Delta_r)\}}.$$
\begin{proof}[Proof of Corollary ~\ref{densityPrime}] Let $\Gamma$ be generated by prime numbers $p_1,....,p_r$, since $\Delta_i$'s are $1$ we have $|\Gamma(k)|=k^r$ and $t_\eta=0$ for all $\eta\mid\sigma_\Gamma=p_1\cdots p_r$ and
$$\gamma_\eta=\begin{cases}
  1& \text{if }\eta\equiv1\bmod4\\
  2 &\text{if }\eta\equiv3\bmod4\\
  3 &\text{if }\eta\equiv2\bmod4.
              \end{cases}
$$
Furthermore
$$\sum_{\alpha\ge\gamma_\eta}
\frac{1}{2^{2\alpha-1}|\Gamma(2^\alpha)|}=\frac{1}{2^{(\gamma_\eta-1)(r+2)}}\sum_{\alpha\ge1}
\frac{1}{2^{2\alpha-1}|\Gamma(2^\alpha)|}$$
and since $|\Gamma(k)|= k^r$ for all $k\in\N^+$, we have that
$$\sum_{\substack{\alpha\geq1}}\frac{1}{p^{2\alpha-1} |\Gamma(p^{\alpha})|}
=\frac{p}{p^{r+2}-1}.$$
Hence, if we let
$$C_r=\prod_{p}\left(1-\frac p{p^{r+2}-1}\right),$$
then
$$C_{\langle p_1,....,p_r\rangle}=C_r\left(1+\sum_{\substack{\eta\mid p_1\cdots p_r\\ \eta\neq1}}\frac{1}{2^{(\gamma_\eta-1)(r+2)}}\prod_{\ell\mid 2\eta}
\frac{\ell}{\ell+1-\ell^{r+2}}\right)$$
and this completes the proof.
\end{proof}

\section{Proof of Corollary $4$}

\begin{proof}[Proof of Corollary ~\ref{naive_x_rational}]  If we set $k_p=\max\{v_p(\Delta_r/\Delta_{r-1}),\cdots,v_p(\Delta_1/\Delta_0)\}$
then for $\alpha\ge k_p$, $|\Gamma(p^{\alpha})|=p^{r\alpha-v_p(\Delta_r)}$.
 Hence
 $$\sum_{\alpha\geq1}\frac{1}{p^{2\alpha-1}|\Gamma(p^{\alpha})|}=
 \sum_{\alpha=1}^{k_p}\frac{1}{p^{2\alpha-1}|\Gamma(p^{\alpha})|}+
 \frac{p^{v_p(\Delta_r)+1-(r+2)k_p}}{p^{r+2}-1}
 \in\Q.$$
 In particular, if $p\nmid \Delta_r$, then $k_p=0$ and
 $|\Gamma(p^{\alpha})|=p^{\alpha r}$ for all $\alpha\ge0$ and
$$\sum_{\alpha\geq1}\frac{1}{p^{2\alpha-1}|\Gamma(p^{\alpha})|}=
\frac{p}{p^{r+2}-1}.$$
Therefore
$$C_\Gamma=
r_\Gamma\prod_{p\nmid\Delta_r}\left(1-\frac{p}{p^{r+2}-1}\right)$$
where
\begin{eqnarray}r_\Gamma&&=\prod_{p\mid\Delta_r}\left(1-\sum_{\substack{\alpha\geq1}}\frac{1}{p^{2\alpha-1} |\Gamma(p^{\alpha})|}\right)\nonumber\\
&&\times\left(1+
\sum_{\substack{\eta\mid \sigma_\Gamma\\ \eta\neq1}}
S_\eta
\prod_{p\mid 2\eta}\left(1-\left(\sum_{\substack{\alpha\geq1}}
\frac{1}{p^{2\alpha-1} |\Gamma(p^{\alpha})|}\right)^{-1}\right)^{-1}
\right)\in\Q.\end{eqnarray}
Finally $C_\Gamma$ is a rational multiple of
$$C_r= \prod_{p}\left(1-\frac{p}{p^{r+2}-1}\right)$$
and this concludes the proof.
\end{proof}

\section{Proof of Theorem $1$}
The proof use the methods of Kurlberg and Pomerance \cite[Theorem
2]{arnold}.
\begin{proof}[Proof of Theorem ~\ref{Main Theorem}] Let $z=\log x$. We have
$$\sum_{p\leq x}|\Gamma_p|^t=\sum_{\substack{p\leq x\\ \ind(\Gamma_p) \leq z}}|\Gamma_p|^t+\sum_{\substack{p\leq x\\\ind(\Gamma_p) > z}}|\Gamma_p|^t= A+E,$$
say. We write $|\Gamma_p|^t=\frac{(p-1)^t}{\ind^t(\Gamma_p)} $ and use the identity $\frac1{\ind^t(\Gamma_p) }=\sum_{uv|\ind(\Gamma_p) }\frac{\mu(v)}{u^t}$, after splitting the sum we have
\begin{eqnarray*}
 A&=&\sum_{p\leq x}(p-1)^t \sum_{\substack{uv|\ind(\Gamma_p)\\ uv\leq z}}\frac{\mu(v)}{u^t}-\sum_{\substack{p\leq x\\ \ind(\Gamma_p) > z}}(p-1)^t \sum_{\substack{uv|\ind(\Gamma_p)\\ uv\leq z}}\frac{\mu(v)}{u^t}\\&=&A_1-E_1,
 \end{eqnarray*}
say. The main term is $A_1$, after switching the summation
 and applying partial summation and using Lemma~\ref{cdt} on GRH, we have
$$A_1=\li(x^{t+1})\sum_{\substack{uv\leq z}}\frac{\mu(v)}{u^t [\Q(\zeta_{uv}, \Gamma^{1/uv}):\Q]}+
O\!\left( x^{t+\frac{1}{2}}\log x \sum_{n\leq z}\left|\sum_{uv=n}\frac{\mu(v)}{u^t}\right|\right).$$
The inner sum in the $O$-term is bounded by $\frac{\varphi(n)}{n}$ so that the $O$-term above
is $O\!\left( x^{t+\frac{1}{2}}\log^2(x)\right)$.
Next we use the elementary fact
$ J_t(\operatorname{rad}(k))= J_t(k)\left(\frac {\rad(k)}{k}\right)^t$
and $\sum_{v|k}\mu(v)v^t=\prod_{p|k}{(1-p^t)}=(-1)^{\omega(k)} J_t(\rad(k))= (-1)^{\omega(k)} \frac {J_t(k)(\rad(k))^t}{k^t}.$
So
$$\sum_{uv=k}\frac{\mu(v)}{u^t [\Q(\zeta_{uv}, \Gamma^{1/uv}):\Q]}=\sum_{v|k}\frac{\mu(v)v^t}{k^t[\Q(\zeta_{k}, \Gamma^{1/k}):\Q]}=\frac{(-1)^{\omega(k)} J_t(k)(\rad(k))^t}{k^{2t}[\Q(\zeta_{k}, \Gamma^{1/k}):\Q])}.$$
Let $C_{\Gamma,t}:=\sum_{k\geq1}\frac{J_t(k) (\rad(k))^t (-1)^{\omega(k)} }{k^{2t}[\Q(\zeta_{k}, \Gamma^{1/k}):\Q]}$,
 after applying Corollary~\ref{corr-1}, finally we have
$$A_1= \li(x^{t+1})\left( C_{\Gamma,t}+ O\!\left(\frac{1}{z^{r}} \right)\right).$$
It remains to estimate the error terms $E$ and $E_1$. Applying Theorem~\ref{Main Lemma}:
$$E\ll \frac{x^t}{z^t} \frac{\pi(x)}{z^{r}}.$$
In order to estimate $E_1$, we calculate
$$\left|\sum_{\substack{uv|n\\ uv\leq z}}\frac{\mu(v)}{u^t}\right| \leq \sum_{\substack{u|n }}\frac{1}{u^t} \sum_{\substack{v|n\\v\leq z}}1\leq \frac{\tau(n)\sigma_t(n)}{n^t},$$ so
  $$E_{1}\leq \sum_{\substack{z<n}}\frac{\tau(n)\sigma_t(n)}{n^t}\sum_{\substack{p\leq x\\ n\mid\text{ind}(\Gamma_p) }}(p-1)^t.$$

\noindent Then applying Lemma~\ref{cdt} and Corollary~\ref{corr-1} we obtain that

\begin{eqnarray*}
  E_{1}&\ll& x^t \pi(x) \sum_{\substack{z<n}}\frac{\tau(n)\sigma_t(n)}{n^t \varphi(n) n^r}.
 \end{eqnarray*}


 \noindent Let $g(n):=\frac{\tau(n)\sigma_t(n)}{n^{t-1}\varphi(n)}$,
 $\sum_{p\leq x}g(p)= (2 + o(1)) \frac{x}{\log x}$. Using Lemma~\ref{multiplicative} (for in our case $\tau$ is $2$), we have
 \begin{eqnarray*}
\sum_{n\leq x}g(n)&=&\left(\frac{1}{e^{\gamma 2}}+o(1)\right) \frac{x}{\log x}  \prod_{p\leq x}
\left(1+ \frac{p}{(p-1)(p^t-1)}\sum_{\nu\geq1}\frac{(\nu+1)(p^{\nu t+t}-1)}{p^{\nu t+\nu} }\right).
\end{eqnarray*}
\noindent To make the product convergent we add a correction factor, and invoke Merten's third formula, we have
\begin{eqnarray*}
\sum_{n\leq x}g(n)\sim & x \log x .\end{eqnarray*}
 Let $G(n):=\sum_{n\leq x}g(n)$ using partial summation, we have
$$\sum_{z<n}\frac{g(n)}{n^{r+1}}=\lim_{T\rightarrow\infty}
\left(\frac{G(T)}{T^{r+1}}-\frac{G(z)}{z^{r+1}}\right)-\int_z^\infty G(u) \frac{d}{du}\left(\frac{1}{u^{r+1}}\right)\ll \frac{\log z}{z^r}.$$
Therefore, we obtain $$E_{1}\ll x^t \pi(x) \frac{\log z}{z^r}.$$
\noindent We have chosen $z=\log x$, finally we have
$$ \sum_{p\leq x}|\Gamma_p|^t= \li(x^{t+1})C_{\Gamma,t}+O\!\left( \frac{x^{t+1}\log\log x}{(\log x)^{r+1}}\right).$$
\end{proof}

\section{Numerical Examples}
In this section we compare some numerical data.
The tables compares the value of $C_\Gamma$ as predicted by Corollary~\ref{densityPrime} with
$$A_\Gamma=\frac{\displaystyle\sum_{p \le10^{10}} |\Gamma_p|}{\displaystyle\sum_{p\le10^{10}}p}.$$
We consider the following cases:
\begin{itemize}
 \item $\Gamma_r=\langle 2,...,p_r\rangle$, the group generated by the first $r$ primes
 \item $\Gamma'_r=\langle3,...,p_{r+1}\rangle$, the group generated by the first $r$ odd primes.
 \item $\Gamma''_r=\langle5,...,p''_r\rangle$, the group generated by the first $r$ primes congruent to $1$
modulo $4$.

\end{itemize}

\begin{center}
\begin{scriptsize}
\begin{tabular}{c|l|l|l|l|l|l|l}
    $r$          &1           &2           &3           &4           &5           &6           &7\\ \hline
  $A_{\Gamma_r}  $&0.5723625220&0.8234145762&0.9219692467&0.9638944667&0.9828346715&0.9916961670&0.9959388895\\
  $C_{\Gamma_r}  $&0.5723602190&0.8234094709&0.9219688310&0.9638925514&0.9828293379&0.9916891587&0.9959315465\\\hline
  $A_{\Gamma'_r} $&0.5797271743&0.8249081874&0.9220326599&0.9639044730&0.9828352799&0.9916947130&0.9959372205\\
  $C_{\Gamma'_r} $&0.5797162295&0.8249060912&0.9220306381&0.9639002343&0.9828302996&0.9916892783&0.9959315614\\\hline
  $A_{\Gamma''_r}$&0.5856374600&0.8246697078&0.9220170449&0.9639045923&0.9828329969&0.9916930151&0.9959357111\\
  $C_{\Gamma''_r}$&0.5856399683&0.8246572843&0.9220082264&0.9638982767&0.9828301305&0.9916892643&0.9959315465\\\hline

\end{tabular}
\end{scriptsize}
\end{center}

\textsc{Acknowledgement:} This paper is part of the Doctorate thesis at the Universit\`{a} Roma Tre.

\bibliographystyle{plain}

\end{document}